\title[Random length-spectrum rigidity]{Random length-spectrum rigidity for free groups}
\author[I.~Kapovich]{Ilya Kapovich}
\address{\tt Department of Mathematics, University of Illinois at
  Urbana-Champaign, 1409 West Green Street, Urbana, IL 61801, USA
  \newline http://www.math.uiuc.edu/\~{}kapovich/} \email{\tt
  kapovich@math.uiuc.edu}
\newtheorem{thm}{Theorem}[section] \newtheorem{lem}[thm]{Lemma}
\newtheorem{cor}[thm]{Corollary} 
\newtheorem{prop}[thm]{Proposition} \theoremstyle{definition}
\newtheorem{defn}[thm]{Definition}
\newtheorem{notation}[thm]{Notation}
\newtheorem{conv}[thm]{Convention} \newtheorem{rem}[thm]{Remark}
\def\epsilon{\varepsilon}
\def\phi{\varphi}
\newcommand{\Curr}{\mbox{Curr}}
\newcommand{\Out}{\mbox{Out}}
\newcommand{\Aut}{\mbox{Aut}}
\newcommand{\cvn}{\mbox{cv}_N}
\newcommand{\cvnbar}{\overline{\mbox{cv}}_N}
\newcommand{\CVN}{\mbox{CV}_N}
\def\strutdepth{\dp\strutbox}
\def \ss{\strut\vadjust{\kern-\strutdepth \sss}}
\def \sss{\vtop to \strutdepth{
\baselineskip\strutdepth\vss\llap{$\diamondsuit\;\;$}\null}}
\def\strutdepth{\dp\strutbox}
\def \sst{\strut\vadjust{\kern-\strutdepth \ssss}}
\def \ssss{\vtop to \strutdepth{
\baselineskip\strutdepth\vss\llap{$\spadesuit\;\;$}\null}}
\def\strutdepth{\dp\strutbox}
\def \ssh{\strut\vadjust{\kern-\strutdepth \sssh}}
\def \sssh{\vtop to \strutdepth{
\baselineskip\strutdepth\vss\llap{$\heartsuit\;\;$}\null}}
\def\bar{\overline}
\def\strutdepth{\dp\strutbox}
\def \ss{\strut\vadjust{\kern-\strutdepth \sss}}
\def \sss{\vtop to \strutdepth{
\baselineskip\strutdepth\vss\llap{$\diamondsuit\;\;$}\null}}
\def\strutdepth{\dp\strutbox}
\def \sst{\strut\vadjust{\kern-\strutdepth \ssss}}
\def \ssss{\vtop to \strutdepth{
\baselineskip\strutdepth\vss\llap{$\spadesuit\;\;$}\null}}
\begin{document}

\begin{abstract}
We say that a subset $S\subseteq F_N$ is \emph{spectrally rigid} if whenever $T_1, T_2\in \cvn$ are points of the (unprojectivized) Outer space such that $||g||_{T_1}=||g||_{T_2}$ for every $g\in S$ then $T_1=T_2$ in $\cvn$.  It is well-known that $F_N$ itself is spectrally rigid; it also follows from the result of Smillie and Vogtmann that there does not exist a finite spectrally rigid subset of $F_N$. We prove that if $A$ is a free basis of $F_N$ (where $N\ge 2$) then almost every trajectory of a non-backtracking simple random walk on $F_N$ with respect to $A$ is a spectrally rigid subset of $F_N$.
\end{abstract}

\thanks{The author was supported by the NSF
  grant DMS-0904200}

\subjclass[2000]{Primary 20F, Secondary 57M, 37B, 37D}

\maketitle


\section{Introduction}

The phenomenon of ``marked length spectrum rigidity" plays an important role in the study of negatively curvature and its generalizations.
Let $M$ be a closed connected manifold and $G=\pi_1(M)$. A Riemannian metric $\rho$ on $M$ of strictly negative (but not necessarily constant) curvature defines a length function $||.||_\rho:G\to \mathbb R$, where for $g\in G$, $g\ne 1$, $||g||_\rho$ is the length of the unique closed geodesic with respect to $\rho$ representing the free homotopy class $[g]$. One also sets $||1_G||_\rho=0$.  Note that for $g\in G$ the value $||g||_\rho$ is equal to the translation length of $g$ as an isometry of the universal cover $(\widetilde M, \widetilde\rho)$.  Note also that $||g||_\rho=||g^{-1}||_\rho=||h^{-1}gh||_\rho$, so that the function $||.||_\rho$ is constant on conjugacy classes in $G$. The function $||.||_\rho$ is usually called the \emph{marked length spectrum} of $\rho$ and the image of this function (which is a subset of $\mathbb R_{\ge 0}$) is called the \emph{length spectrum} of $\rho$.
The \emph{marked length spectrum rigidity conjecture} states that
knowing the function $||.||_\rho:G\to \mathbb R$ determines the
isometry type of $(M,\rho)$. There are many variations and generalizations of this conjecture, including representation-theoretic and non-positive curvature versions.
The conjecture has been proved in a number of important cases (e.g
\cite{Cr,CEK,CFF,Ha99,HP,Otal}; see a survey of Spatzier~\cite{Sp} for
more details), but remains open in general.

For closed hyperbolic surfaces the situation is particularly nice and well-understood. Thus it is known that if $S_g$ is a closed oriented surface of genus $g\ge 2$, then there exist elements $h_1, \dots, h_{6g-5}\in G=\pi_1(S_g)$ such that whenever $\rho_1,\rho_2$ are two points in the Teichmuller space $\mathcal T(S_g)$ (i.e. marked hyperbolic metrics on $S_g$) such that $||h_i||_{\rho_1}=||h_i||_{\rho_2}$ for  $i=1,\dots , 6g-5$ then $\rho_1=\rho_2$ in  $\mathcal T(S_g)$. Thus already knowing that $||.||_{\rho_1}$ and $||.||_{\rho_2}$ agree on the finite set $\{h_1,\dots, h_{6g-5}\}$ implies that  $\rho_1=\rho_2$ in  $\mathcal T(S_g)$.  Therefore we may call the subset $\{h_1,\dots, h_{6g-5}\}\subseteq G$ ``spectrally rigid". The notion of a spectrally rigid set makes sense in any context where full marked length spectrum rigidity is already known. We think that looking for ``small'' spectrally rigid sets in such cases is an interesting general problem representing the next level in the study of length spectrum rigidity.

If $G$ is a group acting by isometries on an $\mathbb R$-tree $T$ then, under some mild assumptions (that are satisfied, in particular, in the Outer space context discussed below), it is well-known that the translation length function $||.||_T:G\to \mathbb R$ determines the action of $G$ on $T$ up to a $G$-equivariant isometry (see \cite{Pau}). However, it turns out that, unlike for the case of hyperbolic surfaces, this type of rigidity falls apart even for very nice actions if we restrict ourselves to finite subsets of $G$.
For a free group $F_N$ (where $N\ge 2$) the \emph{Culler-Vogtmann Outer space} $\cvn$ is an analog of the Teichmuller space of a hyperbolic surface. The space $\cvn$ consists of minimal free discrete isometric actions of $F_N$ on $\mathbb R$-trees, considered up to $F_N$-equivariant isometry. Every element $T\in \cvn$ arises as the universal cover of a finite graph $\Gamma$, whose fundamental group is identified with $F_N$ via a particular isomorphism, where edges of $\Gamma$ are given positive real lengths and their lifts to $T$ are given the same lengths. There is an important subset, $\CVN\subseteq \cvn$, consisting of all $T\in \cvn$ such that the quotient metric graph $T/F_N$ has volume 1. The space $\CVN$ is the  \emph{projectivized Culler-Vogtmann Outer space}, which in fact was introduced first, in \cite{CV}. Both $\cvn$ and $\CVN$ play an important role in the study of $Out(F_N)$.
We say that a subset $R\subseteq F_N$ is \emph{spectrally rigid} if whenever $T_1, T_2\in \cvn$ are such that $||g||_{T_1}=||g||_{T_2}$ for every $g\in R$ then $T_1=T_2$ in $\cvn$. As noted above, $R=F_N$ is spectrally rigid and in fact it is enough to make $R$ consist of representatives of all conjugacy classes in $F_N$.
A surprising result of Smillie and Vogtmann~\cite{SV} shows that there
does not exist a finite spectrally rigid subsets of $F_N$, where $N\ge
3$. They prove, in particular,  that for any finite subset $R\subseteq
F_N$ there exists a one-parametric family $(T_t)_{t\in [0,1]}$ of
distinct points of $\CVN$ such that for every $t\in [0,1]$ the length
functions $||.||_{T_0}$ and $||.||_{T_t}$ agree on $R$.  A similar
statement follows from the recent work of  Duchin, Leininger, and Rafi
for flat metrics on surfaces of finite type~\cite{DLR}. Moreover, the paper \cite{DLR} gives a complete characterization of when a set of simple closed curves on a finite type surface is spectrally rigid with respect to the space of flat metrics on that surface.

As noted above, we believe that for any context where full length spectrum rigidity is known it becomes interesting to look for ``small" (in some reasonable sense) spectrally rigid subsets of the group $G$ in question and to try to characterize which subsets of $G$ are spectrally rigid. Apart from the above mentioned paper of Duchin, Leininger, and Rafi, we are not aware of any other results in this direction. In the present paper, as well as in our joint work with Carette~\cite{CK}, we study infinite but "sparse" subsets of $F_N$ that are spectrally rigid with respect to the collection of length functions on $F_N$ coming from $\cvn$.

Let $N\ge 2$ and let $A$ be a free basis of $F_N$. Consider a simple non-backtracking random walk on $F_N$ corresponding to $A$, starting at $1\in F_N$. Thus a trajectory of this walk is a semi-infinite freely reduced word $\xi=x_1x_2\dots x_n\dots $ where $x_i\in A^{\pm 1}$. For $n\ge 1$ denote $\xi(n)=x_1x_2\dots x_n\in F_N$, the initial segment of $\xi$ of length $n$.
 
 We prove the following:
 
 \begin{thm}\label{main}
 Let $N\ge 2$ and let $A$ be a free basis of $F_N$.
 For a.e. trajectory $\xi$ of the non-backtracking simple random walk on $F_N$ with respect to $A$ the following holds:
 
 For every $n_0\ge 1$ the subset
 \[
 \{\xi(n): n\ge n_0\}\subseteq F_N
 \]
 is spectrally rigid.
 \end{thm}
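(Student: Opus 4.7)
My approach is to work with geodesic currents. Recall that $\Curr(\FN)$ denotes the space of $\FN$- and flip-invariant Radon measures on $\partial^2\FN$ (with the weak-$\ast$ topology); each $g\in\FN\setminus\{1\}$ has an associated counting current $\eta_g$, and the Kapovich-Lustig intersection form $\langle\cdot,\cdot\rangle:\cvn\times\Curr(\FN)\to\mathbb{R}_{\ge 0}$ is continuous, bihomogeneous and satisfies $\|g\|_T=\langle T,\eta_g\rangle$; moreover, $T\mapsto\langle T,\cdot\rangle$ is injective on $\cvn$. Given $T_1,T_2\in\cvn$ with $\|\xi(n)\|_{T_1}=\|\xi(n)\|_{T_2}$ for all $n\ge n_0$, set $\Phi:=\langle T_1,\cdot\rangle-\langle T_2,\cdot\rangle$, a continuous linear functional on signed currents, so the hypothesis reads $\Phi(\eta_{\xi(n)})=0$ for all $n\ge n_0$. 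The plan is to show this forces $\Phi\equiv 0$, hence $T_1=T_2$.

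First I would identify the asymptotic direction. The non-backtracking walk is driven by a stationary ergodic Markov chain on $A^{\pm 1}$ (uniform invariant distribution; transitions uniform over the $2N-1$ admissible continuations). By Birkhoff's ergodic theorem, for almost every $\xi$ and every finite reduced word $v$ in $A$, the subword frequency $\tfrac{1}{n}\#\{i:\xi_i\cdots\xi_{i+|v|-1}=v\}$ converges to a positive limit $p_A(v)$. Together with a standard $O(1)$ bound on cyclic-reduction defects, this upgrades to weak-$\ast$ convergence $\tfrac{1}{n}\eta_{\xi(n)}\to\nu_A$ in $\Curr(\FN)$, where $\nu_A$ is the uniform (Patterson-Sullivan) current associated to $A$; crucially $\mathrm{supp}(\nu_A)=\partial^2\FN$. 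Taking the limit in the hypothesis yields $\Phi(\nu_A)=0$, but this single equation is far from enough to conclude $T_1=T_2$.

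The core of the argument is to exploit the \emph{pointwise} equalities $\Phi(\eta_{\xi(n)})=0$ via the increment currents
\[
\delta_n \;:=\; \eta_{\xi(n+1)}-\eta_{\xi(n)}.
\]
Modulo bounded cyclic-reduction corrections, $\delta_n$ is a signed measure recording, for each reduced word $v$, whether $v$ appears as the length-$|v|$ suffix of the new prefix $\xi(n+1)$. By linearity $\Phi(\delta_n)=0$ for every $n\ge n_0$, and by ergodicity every finite reduced word eventually appears as such a suffix. This produces a rich infinite linear system on the ``local suffix weights'' attached to $T_1,T_2$ via their marked-graph presentations $\Gamma_{T_1},\Gamma_{T_2}$; pushing this system through the Kapovich-Lustig description of the intersection form, one shows the only solution is that all weight differences vanish, hence $T_1=T_2$ in $\cvn$.

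The main obstacle lies precisely in this last step: one must simultaneously control the bounded but nonzero cyclic-reduction corrections along the walk and establish non-degeneracy of the ergodically-generated linear system on weight differences. The full support $\mathrm{supp}(\nu_A)=\partial^2\FN$ is essential here, as it is exactly what guarantees that along almost every trajectory the sequence of suffixes realizes every local combinatorial configuration needed to force all the relevant weight differences to zero.
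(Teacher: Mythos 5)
Your setup (the extended intersection form, linearity, $\Phi:=\langle T_1,\cdot\rangle-\langle T_2,\cdot\rangle$ vanishing on the prefix currents, and the observation that $\tfrac1n\eta_{\xi(n)}\to\nu_A$ alone is insufficient) matches the paper's framework, but the proof has a genuine gap at exactly the point you flag as ``the main obstacle'': you never actually show that the equations $\Phi(\eta_{\xi(n)})=0$, or $\Phi(\delta_n)=0$ for the increments $\delta_n=\eta_{\xi(n+1)}-\eta_{\xi(n)}$, force $\Phi\equiv 0$. The assertion that ergodicity makes every reduced word appear as a suffix, plus full support of $\nu_A$, yields a ``non-degenerate linear system'' is not an argument: each increment $\delta_n$ mixes three contributions --- the new suffix configuration, the fixed initial segment of $\xi$ (because the counting current is taken of the \emph{cyclic} word, so weights change at the seam), and the cyclic-reduction corrections --- and you give no mechanism for decoupling them, nor any reason why the closed linear span of the $\delta_n$ (or of the $\eta_{\xi(n)}$) should be large enough to separate $T_1$ from $T_2$. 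Note also that $\Phi$ is expressed through the marked graphs of $T_1,T_2$, not through $A$-weights, so ``local suffix weights'' do not directly give a finite linear system in any obvious sense; what is really needed is a density statement in the space of signed currents.

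The paper closes precisely this gap with Proposition~\ref{prop:main}: for a.e.\ $\xi$, the closed $\mathbb R$-linear span of $\{\eta_{\xi(n)}:n\ge n_0\}$ is \emph{all} of the space of signed currents. The mechanism is not single-letter increments but differences of two far-apart prefixes built around high powers of a target word: for any cyclically reduced $w$ one finds prefixes of the form $vf_nvsw^nt$ and $vf_nvs(w^nt)^2$ (occurring past position $n_0$, with $s,t$ chosen so everything is cyclically reduced), and Corollary~\ref{cor:key} on the finite-dimensional approximations $\tau_M$ shows that $\tfrac1n\bigl(\eta_{vf_nvs(w^nt)^2}-\eta_{vf_nvsw^nt}\bigr)\to\eta_w$; dividing by $n$ is exactly what kills the seam and boundary effects that your increment scheme leaves uncontrolled. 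One then still needs Proposition~\ref{prop:dense} (a Hahn--Jordan decomposition argument showing every signed current is a difference of two currents, so the span of the rational currents is dense in the signed-current space) to pass from ``all $\eta_w$ lie in the closed span'' to ``the closed span is everything,'' after which $\Phi\equiv0$ gives $\|w\|_{T_1}=\|w\|_{T_2}$ for all $w$ and hence $T_1=T_2$. Without an argument of this kind (or some substitute proof of non-degeneracy), your proposal stops short of proving the theorem.
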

 
 It is known (c.f. \cite{KKS}) that for a.e. trajectory $\xi$ of the walk above we have
 \[
 \lim_{n\to\infty} \frac{||\xi(n)||_A}{n}=1.
 \]
 Here for $g\in F_N$,  $||g||_A$ is the cyclically reduced length of $g$ over $A$. Thus $||\xi(n)||_A=n+o(n)$, which shows that the set $\{\xi(n): n\ge n_0\}$ in Theorem~\ref{main} represents a ``sparse" collection of conjugacy classes in $F_N$.

The proof of Theorem~\ref{main} relies on the machinery of \emph{geodesic currents} on free groups (see Section~\ref{sect:background} for the definition and basic properties) and particularly the notion of a ``geometric intersection form'' between trees and currents, developed in the work of the author and Martin Lustig~\cite{Ka2, KL2,KL3}. The presence of $\mathbb R_{\ge 0}$-linear structure on the space of currents turns out to be particularly useful.
Let us point out the new ideas introduced in this paper. In the earlier work~\cite{Ka2,KL2} we have constructed a "length pairing" or a "geometric intersection form" 
\[
\langle \cdot\, , \ \cdot \rangle: \cvn\times Curr(F_N)\to \mathbb R_{\ge 0}
\] 
where $Curr(F_N)$ is the space of geodesic currents on $F_N$ (see Section~\ref{sect:background} below for details). This geometric intersection form turns out to have a number of nice properties, one of which is being $\mathbb R_{\ge 0}$-linear in the second argument. The elements of $Curr(F_N)$, that is geodesic currents, are traditionally defined as \emph{positive} Borel measures on $\partial^2F_N=\partial F_N\times \partial F_N - diag$ satisfying some additional properties, namely being $F_N$-invariant, flip-invariant and finite on compact sets. It turns out that for the proof of Theorem~\ref{main} we need to extend the geometric intersection form to the space of \emph{signed} geodesic currents, which satisfy all the axioms of the standard currents, except that as measures they are no longer required to be positive and are allowed to take arbitrary (but finite) real values on compact subsets.  The extended intersection form now takes values in $\mathbb R$ and enjoys $\mathbb R$-linearity in the second argument, which is the space $\mathcal SCurr(F_N)$ of all signed currents on $F_N$. Crucially, the $\mathbb R$-linear span of the standard rational currents is dense in $\mathcal SCurr(F_N)$ (see Proposition~\ref{prop:dense} below). 
It is well-known, and fairly easy to see~\cite{KKS} that for a random trajectory $\xi$ as in Theorem~\ref{main} we have
\[
\lim_{n\to\infty}\frac{\eta_{\xi(n)}}{n}=\nu_A
\]
where $\eta_{\xi(n)}$ is the rational current corresponding to $\xi(n)$ and where $\nu_A$ is the \emph{unform current} on $F_N$. Thus, projectively, the set  $\{\xi(n): n\ge n_0\}$ (viewed as a subset of $Curr(F_N)$) in Theorem~\ref{main} is rather small: after rescaling, this set represents a sequence converging to a single point of $Curr(F_N)$. A key step in the proof of Theorem~\ref{main} is Proposition~\ref{prop:main} which says that the $\mathbb R$-linear span of the set $\{\eta_{\xi(n)}: n\ge n_0\}$ is dense in $\mathcal SCurr(F_N)$. Informally,  Proposition~\ref{prop:main} says that, although the sequence $\frac{\eta_{\xi(n)}}{n}$ converges to a single point in $\mathcal SCurr(F_N)$ (namely $\nu_A)$, it converges to that point "from all possible directions". Once Proposition~\ref{prop:main}  is established, it is not hard to derive Theorem~\ref{main} using the basic properties of the geometric intersection form.

\begin{rem}\label{rem:orbit}
In an ongoing joint work with Mathieu Carette~\cite{CK} we prove that the set of primitive elements in a free group $F_N$ (where $N\ge 2$) is spectrally rigid, and that, moreover, for $N\ge 3$ every $Aut(F_N)$-orbit of a nontrivial element of $F_N$ is spectrally rigid as well. The proofs there require substantially different methods, as it is not hard to show (by using Whitehead graph properties of primitive elements)  that, unlike for the spectrally rigid sets constructed in the present paper, the linear span of the set of rational currents of primitive elements is not dense in the space of signed geodesic currents.  The result of \cite{CK} raises a natural question of when for a subgroup $H\le Aut(F_N)$ and an element $f\in F_n$ the orbit $Aut(F_N)f$ is spectrally rigid.

It turns out that for the case where $H$ is cyclic generated by an atoroidal fully irreducible automorphism, an $H$-orbit is never spectrally rigid.
As noted above, Smillie and Vogtmann proved~\cite{SV}
that for any finite set $R\subseteq F_N$ (where $N\ge 3$), the set $R$
is not spectrally rigid. In fact, the proof of Theorem~1 in \cite{SV}
goes through verbatim and shows that any (possibly infinite) subset
$R\subseteq F_N$ (again where $N\ge 3$) is not spectrally rigid provided it satisfies the
following ''weak aperiodicity'' property: there is a free basis $A$ of $F_N$, an element
$a\in A$ and an integer $M\ge 1$ such that whenever $a^k$ is a subword
of a cyclically reduced word over $A$ representing an element
conjugate to some $g\in R$ then $|k|\le M$. Let $N\ge 3$ and let
$\phi\in \Aut(F_N)$ be atoroidal and fully irreducible (also known as "iwip", which stands for irreducible with irreducible powers). Proposition~1.18 in \cite{BFH97}, together with
basic train-track properties, then implies
that for any nontrivial $g\in F_N$ the set $\{\phi^n(g): n\in \mathbb
Z\}$ satisfies the ''weak aperiodicity'' property. Hence the set
$\{\phi^n(g): n\in \mathbb Z\}$ is not spectrally rigid in $F_N$.
\end{rem}

The author is grateful to Chris Leininger and Lewis Bowen for helpful discussions.

\section{Outer space and the space of geodesic currents}\label{sect:background}

We give here only a brief overview of basic facts related to Outer space and the space of geodesic currents. We refer the reader to~\cite{Martin,Ka1,Ka2} for more detailed background information about geodesic currents. Other useful references regarding currents and their applications are \cite{KL1,KN,KL2, KL3,Fra,BF08,Ha,CP,KL5,CHL3}. 

\subsection{Outer space} Let $N\ge 2$. The \emph{unprojectivized Outer space} $\cvn$ consists of all minimal free and discrete isometric actions on $F_N$ on $\mathbb R$-trees (where two such actions are considered equal if there exists an $F_N$-equivariant isometry between the corresponding trees). There are several different topologies on $\cvn$ that are known to coincide, in particular the equivariant Gromov-Hausdorff convergence topology and the so-called \emph{axis} or \emph{length function} topology. Every $T\in \cvn$ is uniquely determined by its \emph{translation length function} $||.||_T:F_N\to\mathbb R$, where $||g||_T$ is the translation length of $g$ on $T$. Two trees $T_1,T_2\in\cvn$ are close if the functions $||.||_{T_1}$ and $||.||_{T_1}$ are close pointwise on a large ball in $F_N$. The closure $\cvnbar$ of $\cvn$ in either of these two topologies is well-understood and known to consists precisely of all the so-called \emph{very small} minimal isometric actions of $F_N$ on $\mathbb R$-trees, see \cite{BF93} and \cite{CL}.

A \emph{simplicial chart} or \emph{marking} on $F_N$ is an isomorphism $\alpha: F_N\to \pi_1(\Gamma)$ where $\Gamma$ is a finite connected graph without degree-one and degree-two vertices. We think of $\Gamma$ as an oriented graph, where every edge $e$ comes equipped with an inverse edge $e^{-1}\ne e$ such that $(e^{-1})^{-1}=e$, and such that the initial vertex of $e$ is the terminal vertex of $\bar e$ and vise versa. An \emph{orientation} on $\Gamma$ is a partition $E\Gamma=E^+\Gamma\sqcup E^-\Gamma$ such that for every $e\in E\Gamma$ exactly one of $e,\bar e$ belongs to $E^+\Gamma$.
 A \emph{metric graph structure} on $\Gamma$ is a function $\mathcal L: E\Gamma\to \mathbb R_{>0}$ such that $\mathcal L(e)=\mathcal L(e^{-1})$ for every $e\in E\Gamma$.  A \emph{marked metric graph} structure on $F_N$ is a pair $(\alpha, \mathcal L)$  where  $\alpha: F_N\to \pi_1(\Gamma)$ is a marking and where $\mathcal L$ is a metric graph structure on $\Gamma$. Every marked metric graph structure $(\alpha, \mathcal L)$  determines a point of $\cvn$ obtained by taking the universal covering (which is a topological tree) $\ widetilde \Gamma$ of $\Gamma$ and lifting the metric structure $\mathcal L$ to $\widetilde \Gamma$. Moreover, every point of $\cvn$ arises in this way. For a fixed chart $\alpha$ varying the metric structure $\mathcal L$ on $\Gamma$ determines an open cone inside $\cvn$.

\subsection{Geodesic currents} Let $\partial^2F_N:=\{ (x,y)| x,y\in \partial F_N, x\ne y\}$. The action of $F_N$ by translations on its hyperbolic boundary $\partial F_N$ defines a natural diagonal action of $F_N$ on $\partial^2 F_N$. A \emph{geodesic current} on $F_N$ is a positive Borel measure on $\partial^2 F_N$, finite on compact subsets,  which is $F_N$-invariant and is also invariant under the ``flip" map $\partial^2 F_N\to \partial^2 F_N$, $(x,y)\mapsto (y,x)$. The space $\Curr(F_N)$ of all geodesic currents on $F_N$ has a natural $\mathbb R_{\ge 0}$-linear structure and is equipped with the weak*-topology
of pointwise convergence on continuous functions. Every point $T\in \cvn$ defines a \emph{simplicial chart} on $\Curr(F_N)$ which allows one to think about geodesic currents as systems of nonnegative weights satisfying certain Kirchhoff-type equations; see \cite{Ka2} for details. 
We briefly recall this description here. Let $\alpha:F_N\to \pi_1(\Gamma)$ be a simplicial chart on $F_N$. We view $\widetilde \Gamma$ as a simplicial tree, and thus an element of $\cvn$, by giving every edge length 1. Then $F_N$ is $F_N$-equivariantly quasi-isometric to $\widetilde \Gamma$, which determines a homeomorphism $\partial F_N\to \partial \widetilde\Gamma$, which we will view as an identification between them. For a nondegenerate geodesic segment $\gamma=[p,q]$ in $\widetilde\Gamma$ the \emph{two-sided cylinder} $Cyl_\Gamma(\gamma)\subseteq \partial^2 F_N$ consists of all $(x,y)\in \partial^2 F_N$ such that the geodesic from $x$ to $y$ in $\widetilde\Gamma$ passes through $\gamma=[p,q]$.  Given a nontrivial reduced (i.e. without backtracking) edge-path $v$ in $\Gamma$ and a current $\mu\in \Curr(F_N)$, the ``weight" $\langle v,\mu\rangle_\alpha$ is defined as $\mu(Cyl_\Gamma(\gamma))$ where $\gamma$ is any lift of $v$ to  $\widetilde \Gamma$ (the fact that the measure $\mu$ is $F_N$-invariant implies that a particular choice of a lift of $v$ does not matter). A current $\mu$ is uniquely determined by the family of weights $\langle v,\mu\rangle_\alpha$ where $v$ varies over all nontrivial reduced finite edge-paths in $\Gamma$. The weak*-topology
on $\Curr(F_N)$ corresponds to pointwise convergence of the weights for every such $v$. 

If $A$ is a free basis of $F_N$, then the wedge of circles corresponding to the elements of $A$ determines a simplicial chart on $F_N$; the universal cover of this wedge of circles is the Cayley graph $T_A$ of $F_N$ with respect to $A$. In this case the weights are indexed by nontrivial freely reduced words $v$ over $A$ and we denote the corresponding weights by $\langle v,\mu\rangle_A$.


For every $g\in F_N, g\ne 1$ there is an associated \emph{counting current} $\eta_g\in \Curr(F_N)$. If $\alpha:F_N\to\pi_1(\Gamma)$ is a simplicial chart  on $F_N$, the conjugacy class $[g]$ of $g$ can be realized realized by a reduced ``cyclic path" $w$, that is a graph immersion from a clock-wise oriented simplicial circle to $\Gamma$ that represents $[\alpha(g)]$. We can think of edges of $w$ as being labelled by edges of $\Gamma$. Then for every nontrivial freely reduced edge-path $v$ in $\Gamma$  the weight $\langle v,\eta_g\rangle_\alpha$ is equal to the total number of occurrences of $v^{\pm 1}$ in $w$, where an \emph{occurrence} of $v$ in $w$ is a vertex on $w$ such that we can read $v$ in $w$ clockwise without going off the circle. We refer the reader to \cite{Ka2} for a detailed exposition on the topic. By construction the counting current $\eta_g$ depends only on the conjugacy class $[g]$ of $[g]$ and it also satisfies $\eta_g=\eta_{g^{-1}}$. One can check~\cite{Ka2} that for $\phi\in \Out(F_N)$ and $g\in F_N, g\ne 1$ we have $\phi\eta_g=\eta_{\phi(g)}$. Scalar multiples $c\eta_g\in \Curr(F_N)$, where $c\ge 0$, $g\in F_N, g\ne 1$ are called \emph{rational currents}. A key fact about $\Curr(F_N)$ states that the set of all rational currents is dense in $\Curr(F_N)$\cite{Ka1,Ka2}.  

If $A$ is a free basis of $F_N$ and $\Gamma_A$ is the wedge of circles corresponding to elements of $A$, a reduced cyclic path $w$  in $\Gamma_A$ can be viewed as a reduced \emph{cyclic word} over $A$, that is, a cyclically reduced word over $A$ written on a simplicial circle in the clockwise direction, without specifying the base-point of the circle. 

The spaces $\cvn$ and $\Curr(F_N)$ come equipped with natural actions of $\Out(F_N)$. However, the details of how these actions are defined are not relevant for this paper and we omit them.

\subsection{Intersection form}

In \cite{KL2} Kapovich and Lustig  constructed a natural geometric \emph{intersection form} which pairs trees and currents:

\begin{prop}\label{int}\cite{KL2}
Let $N\ge 2$. There exists a unique continuous map $\langle\ ,\ \rangle : \cvnbar \times \Curr(F_N)\to \mathbb R_{\ge 0}$ with the following properties:
\begin{enumerate}
\item We have $\langle T, c_1\mu_1+c_2\mu_2\rangle=c_1\langle T,\mu_1\rangle+c_2\langle T,\mu_2\rangle$ for any $T\in \cvnbar$, $\mu_1,\mu_2\in \Curr(F_N)$, $c_1,c_2\ge 0$.
\item We have $\langle cT, \mu\rangle=c\langle T,\mu\rangle$ for any $T\in\cvnbar$, $\mu\in \Curr(F_N)$ and $c\ge 0$.
\item We have $\langle T\phi,\mu\rangle=\langle T, \phi\mu\rangle$ for any $T\in\cvnbar$, $\mu\in \Curr(F_N)$ and $\phi\in \Out(F_N)$.
\item We have $\langle T, \eta_g\rangle=||g||_T$ for any $T\in \cvnbar$ and $g\in F_N, g\ne 1$.
\item  For every $\mu\in \Curr(F_N)$ and for every tree $T\in \cvn$ given by a simplicial chart $\alpha:F_N\to \pi_1(\Gamma)$ with a metric structure $\mathcal L$ on $\Gamma$ we have
\[
\langle T, \mu\rangle=\sum_{e\in E^+\Gamma} \langle e, \mu\rangle_\alpha \mathcal L(e).
\]
\end{enumerate}
\end{prop}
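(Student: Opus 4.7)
The plan is to prove uniqueness first, which is essentially formal, and then construct the pairing by explicit formula on the open part $\cvn$ and extend by continuity to the closure $\cvnbar$. The main technical obstacle will be verifying joint continuity of the extension on $\cvnbar \times \Curr(F_N)$, since for non-simplicial very small trees in $\cvnbar \setminus \cvn$ the explicit formula in property (5) is no longer directly available.

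For uniqueness, I would use the density of rational currents in $\Curr(F_N)$. Properties (1), (2), and (4) force $\langle T, c\eta_g\rangle = c\|g\|_T$ for every $c\ge 0$ and $g\in F_N\setminus\{1\}$, and by linearity they determine the pairing on the $\mathbb R_{\ge 0}$-linear span of rational currents. Joint continuity (together with continuity of $T\mapsto \|g\|_T$ on $\cvnbar$) then forces the pairing on all of $\cvnbar \times \Curr(F_N)$.

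For existence on $\cvn$, I would take formula (5) as the definition: given $T\in \cvn$ with simplicial chart $\alpha: F_N \to \pi_1(\Gamma)$ and metric structure $\mathcal L$, set
\[
\langle T,\mu\rangle := \sum_{e\in E^+\Gamma}\langle e,\mu\rangle_\alpha\, \mathcal L(e).
\]
The first substantive step is to verify this is independent of the presentation of $T$: any two simplicial charts for the same point of $\cvn$ differ by subdivisions and equivariant isometries of $\widetilde\Gamma$, and both operations preserve the sum because weights on a subdivided edge add up appropriately (this follows from the Kirchhoff-type identities for currents) and the edge-lengths redistribute in the same way. Properties (1), (2), (3) then follow immediately from the formula. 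Property (4) with $\mu=\eta_g$ is the key compatibility check: the counting weight $\langle e,\eta_g\rangle_\alpha$ equals the number of occurrences of $e^{\pm 1}$ in the reduced cyclic path $w$ representing $[g]$ in $\Gamma$, and summing $\mathcal L(e)$ over these occurrences gives exactly the cyclically reduced length of the loop $w$ in the metric graph, which is $\|g\|_T$. Continuity on $\cvn$ in $T$ (for fixed $\mu$) is clear on each open simplicial cone, and continuity in $\mu$ (for fixed $T$) follows from weak$^*$-convergence applied to the finitely many cylinder sets $Cyl_\Gamma(e)$.

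The hardest step will be the extension to $\cvnbar$. My plan is to use the fact that rational currents form a dense cone and that $T\mapsto \|g\|_T$ is continuous on $\cvnbar$ (this is the axis topology). On rational currents the pairing is already given by $\langle T, c\eta_g\rangle = c\|g\|_T$, which extends continuously to $\cvnbar$. The nontrivial issue is to show that this partially defined pairing extends to a jointly continuous map on $\cvnbar \times \Curr(F_N)$ rather than only a separately continuous one; equivalently, one must rule out oscillation when $T_n\to T\in \cvnbar$ and $\mu_n\to \mu$ simultaneously. I would attack this by showing that on each simplex of $\cvn$ the formula (5) is linear in the metric parameters and weak$^*$-continuous in $\mu$ with uniform control, and that as one approaches the boundary the contribution of collapsing edges goes to zero uniformly in $\mu$ on weak$^*$-compact sets (using the translation-length-plus-total-weight estimates that bound $\langle e,\mu\rangle_\alpha$ in terms of $\mu$ evaluated on a fixed compact neighborhood in $\partial^2 F_N$). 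Combined with a Stone–Weierstrass style density argument using rational currents, this will upgrade separate continuity to joint continuity on $\cvnbar \times \Curr(F_N)$, completing the construction and automatically transferring properties (1)–(4) from $\cvn$ to $\cvnbar$.
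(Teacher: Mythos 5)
First, note that the paper you were asked to match does not actually prove this proposition: it is quoted verbatim from \cite{KL2}, with the easier case of $\cvn\times\Curr(F_N)$ attributed to the earlier work \cite{Lu1,Ka2}. So the comparison is really with the proof in \cite{KL2}. Your uniqueness argument (properties (1), (2), (4) plus density of rational currents and continuity in the second variable) is correct and is essentially the standard one, and your construction and verification of (1)--(5) on $\cvn\times\Curr(F_N)$ via formula (5) is also fine; that part reproduces the content of \cite{Lu1,Ka2}.

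The genuine gap is the extension to $\cvnbar$, which is precisely the content of the cited theorem and is left in your proposal as a plan that would not go through as stated. Your picture of a point of $\cvnbar\setminus\cvn$ as a limit in which some edge lengths of a fixed simplicial chart collapse covers only simplicial degenerations; $\cvnbar$ consists of all very small actions, including trees with dense orbits (for instance stable trees of fully irreducible automorphisms and trees dual to measured laminations on surfaces), for which there is no simplicial chart, formula (5) is unavailable, and the phrase ``the contribution of collapsing edges goes to zero'' has no meaning. Moreover, the proposed upgrade from separate to joint continuity is not a valid principle: separately continuous pairings of this kind routinely fail to be jointly continuous, $\Curr(F_N)$ itself is not compact (only its projectivization is), and an arbitrary pair of convergent sequences $T_n\to T\in\cvnbar\setminus\cvn$, $\mu_n\to\mu$ is not controlled by estimates ``uniform on weak${}^*$-compact sets'' together with density of rational currents. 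In \cite{KL2} the joint continuity on $\cvnbar\times\Curr(F_N)$ is the main theorem, and its proof uses genuinely different tools --- in particular uniform bounded-backtracking estimates for equivariant maps to very small trees and quantitative approximation of trees in $\cvnbar$, giving explicit error bounds that hold simultaneously in both variables. Nothing in your sketch substitutes for those estimates, so the key step of the statement (existence of the pairing, with the stated properties, on all of $\cvnbar\times\Curr(F_N)$) remains unproved.
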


Recall that  $\cvnbar$ is the closure of $\cvn$ in the length function topology, or, equivalently, in the equivariant Gromov-Hausdorff convergence topology.  The case of $\cvn \times \Curr(F_N)$ was dealt with in an earlier papers~\cite{Lu1,Ka2}. 

\section{Uniform measure on the boundary}

\begin{conv}
From now and until the end of the paper, unless specified otherwise, we fix a free basis $A=\{a_1,\dots, a_N\}$ of $F_N$, where $N\ge 2$.
\end{conv}

Recall that $T_A$ is the Cayley graph of $F_N$ with respect to $A$. 
For a word $v$ (not necessarily freely reduced) we denote by $|v|$ the length of $v$, denote by $|v|_A$ the freely reduced length of $v$ and denote by $||v||_A$ the cyclically reduced length of $v$. Thus $||v||_A=||v||_{T_A}$.  We will think of elements of $F_N$ as freely reduced words over $A$, so that when writing $v\in F_N$, we will implicitly assume that $v$ is a freely reduced word.
 
The boundary $\partial F_N$ consists of geodesic rays originating at the vertex $1$ in $T_A$. Every such ray $\xi$ can also be viewed as a semi-infinite freely reduced word $\xi=x_1x_2\dots x_n\dots $ over $A$, where $x_i\in A^{\pm 1}$.  For $n\ge 1$ we denote $\xi(n)=x_1\dots x_n\in F_N$, the initial segment of $\xi$ of length $n$.  If $v$ is a nontrivial freely reduced word over $A$, denote by $Cyl_A(v)$ the set of all $\xi\in \partial F_N$ such that $v$ is an initial segment of $\xi$. The set $Cyl_A(v)\subseteq \partial F_N$ is the \emph{one-sided cylinder} defined by $v$. The set of all one-sided cylinders forms a basis of open (also closed) sets for the topology on $\partial F_N$.  Any finite Borel measure on $\partial F_N$ is determined by its values on all the cylinders $Cyl_A(v)$, $v\in F_N, v\ne 1$.

For $M\ge 1$ denote by $S_A(M)$ the set of all freely reduced words over $A$ in $F_N$ of length $M$,  so that $S_A(M)$ is the sphere of radius $M$ in the Cayley graph of $F_N$ corresponding to the generating set $A$. Then $d(M):=\#(S_A(M))=2N(2N-1)^{M-1}$.

\begin{defn}[Uniform measure]
The \emph{uniform measure} $m_A$ corresponding to $A$ is a Borel probability measure on $\partial F_N$ given by the condition
\[
m_A(Cyl_A(v))= \frac{1}{d(|v|_A)}, \quad v\in F_N, v\ne 1.
\]
\end{defn}
It is not hard to see that the above condition indeed defines a Borel probability measure on $\partial F_N$. Moreover, this measure is exactly the exit measure of the simple non-backtracking random walk on $F_N=F(A)$ starting at $1\in F_N$. We refer the reader to \cite{KKS} for more details. Note also that for $m_A$-a.e. $\xi\in \partial F$ the semi-infinite freely reduced word $\xi=x_1x_2\dots x_n\dots $ contains every finite freely reduced word $v\in F(A)$ as a subword infinitely many times. This fact is a consequence of the law of large numbers and one can say something more precise: if $\langle v, \xi(n)\rangle$ is the number of times that $v$ occurs as a subword of $\xi(n)=x_1\dots x_n$ then
\[
\lim_{n\to\infty} \frac{langle v, \xi(n)\rangle}{n}=\frac{1}{\#S_A(M)}.
\]
This observation is responsible for the fact (see Remark~\ref{uc} below) that for $m_A$-a.e. $\xi$ we have 
\[
\lim_{n\to\infty}\frac{\eta_{\xi(n)}}{n}
\]
where $\nu_A$ is the \emph{uniform current} on $F_N$ corresponding to $A$.

\section{Finite-dimensional approximations of $\Curr(F_N)$}

\subsection{Inverse limit description of $\Curr(F_N)$}
We briefly recall here an inverse limit description of  $\Curr(F_N)$ by finite-dimensional approximations, see \cite{Ka1,Ka2} for more details.

Let $M\ge 1$.  Consider the euclidean space $\mathbb R^{d(M)}$ with coordinates indexed by elements $v\in S_A(M)$. We write an element $p\in \mathbb R^{d(M)}$ as $p=(p_v)_{v\in S_A(M)}$ where $p_v$ is the $v$-th coordinate of $p$.
Now let $Z_M\subseteq \mathbb R^{d(M)}$ be the set of all $p\in \mathbb R^{d(M)}$ such that t$p$ satisfies the following conditions:
\begin{gather*}
p_v\ge 0 \text{ for every } v\in S_A(M)\\
p_v=p_{v^{-1}}  \text{ for every } v\in S_A(M)\\
\sum_{a\in A^{\pm 1}: |ua|_A=M} p_{ua}\quad =\sum_{b\in A^{\pm 1}: |bu|_A=M} p_{bu} \quad \text{ for every   } u\in S_A(M-1)
\end{gather*}
If $M=1$, the last condition above is absent when $Z_1$ is defined.

Thus $Z_M\subseteq \mathbb R^{d(M)}$ is a closed polyhedral cone. 
For $M\ge 2$ define the projection map $\pi_M: Z_M\to Z_{M-1}$ by the formula 
\[
(\pi_M(p))_u=\sum_{a\in A^{\pm 1}: |ua|_A=M} p_{ua}
\]
where $p\in Z_M$ and $u\in S_A(M-1)$. Then (see \cite{Ka1,Ka2}) the map $\pi_M: Z_M\to Z_{M-1}$ is onto.
Moreover, the inverse limit of the sequence of maps \[ \overset{\pi_{M+1}}{\to} Z_M  \overset{\pi_M}{\to} Z_{M-1} \overset{\pi_{M-1}}{\to} \dots  \overset{\pi_2}{\to} Z_1  \tag{\dag}\] is naturally homeomorphic to $\Curr(F_N)$. It is useful to see this identification explicitly. For a current $\mu\in \Curr(F_N)$ and $M\ge 1$ let $\tau_M(\mu)=p\in Z_M$ where $p_v=\langle v, \mu\rangle_A$ for every $v\in S_A(M)$. Then $\pi_M\circ \tau_M=\tau_{M-1}$ for every $M\ge 2$. The sequence $(\tau_M(\mu))_{M\ge 1}$ is an element of the inverse limit of $(\dag)$  corresponding to $\mu\in \Curr(F_N)$.  Note that the maps $\pi_M$ and $\tau_M$ and $\mathbb R_{\ge 0}$-linear.

For a cyclic word $w$ over $A$ denote $\tau_M(w):=\tau_M(\eta_w)\in Z_M$.

The following lemma is an immediate consequence of the definitions and of the fact that for a cyclic word $w$ the weight $\langle v, \eta_w\rangle_A$ is equal to the number of occurrences of $v^{\pm 1}$ in $w$. 

\begin{lem}\label{lem:key}
Let $M\ge 2$ and let $u\in S_A(M)$. Then for any freely reduced word $w$ over $A$ of length $\ge 2M$ such that the words $w$ and $wu$ are cyclically reduced as written, the difference $\tau_M(wu)-\tau_M(w)$ depends only on $u$ and the initial and terminal segments of $w$ of length $M$.
\end{lem}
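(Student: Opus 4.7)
The plan is to unpack $\tau_M(\cdot)_v$ coordinate by coordinate and cancel the overwhelmingly many common terms, keeping track only of the boundary (wraparound) contributions.

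Fix $v\in S_A(M)$; by definition $\tau_M(w)_v=\langle v,\eta_w\rangle_A$ equals the total number of clockwise occurrences of $v$ and of $v^{-1}$ in the cyclic word representing $[w]$. Since $w$ is cyclically reduced as written, this cyclic word is obtained from the linear word $w$ by simply joining its endpoints, and similarly for $wu$. Set $n=|w|\ge 2M$, write $w=w[1]\cdots w[n]$, $u=u[1]\cdots u[M]$, and let $w_1=w[1]\cdots w[M]$ and $w_2=w[n-M+1]\cdots w[n]$ be the initial and terminal length-$M$ segments of $w$.

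I will index occurrences by their starting position. In the cyclic word $w$ of length $n$, an occurrence of $v$ starts at some position $i\in\{1,\dots,n\}$ modulo $n$. For $i\in\{1,\dots,n-M+1\}$ the substring read is $w[i]\cdots w[i+M-1]$, lying entirely inside the linear $w$; call these \emph{internal} occurrences. For $i\in\{n-M+2,\dots,n\}$ the substring read is $w[i]\cdots w[n]\,w[1]\cdots w[M-n+i-1]$; call these \emph{boundary} occurrences. Note that boundary occurrences use only the letters in positions $\{1,\dots,M-1\}\cup\{n-M+2,\dots,n\}$, i.e.\ data determined by $w_1$ and $w_2$. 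Exactly the same partition applies to the cyclic word $wu$ of length $n+M$: internal occurrences at positions $i\in\{1,\dots,n-M+1\}$ again read the substring $w[i]\cdots w[i+M-1]$ of $w$ and therefore contribute the \emph{same} count as in cyclic $w$; the remaining positions $i\in\{n-M+2,\dots,n+M\}$ are boundary occurrences, and each of these reads a substring composed only of letters of $w[n-M+2]\cdots w[n]$, of $u$, and of $w[1]\cdots w[M-1]$.

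Subtracting, the internal contributions cancel term-by-term and
\[
\tau_M(wu)_v-\tau_M(w)_v = B_{wu}(v)-B_w(v),
\]
where $B_{wu}(v)$ and $B_w(v)$ are the boundary counts described above. Each of $B_{wu}(v)$ and $B_w(v)$ is a function purely of the first $M-1$ letters of $w$, the last $M-1$ letters of $w$, and $u$; but these are determined by $w_1$, $w_2$, and $u$. The identical argument applies to occurrences of $v^{-1}$, so adding the $v$ and $v^{-1}$ contributions gives $(\tau_M(wu)-\tau_M(w))_v$ as a function only of $w_1,w_2,u$ for each coordinate $v\in S_A(M)$, as required.

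The main obstacle is purely bookkeeping: one must be careful about the boundary ranges (in particular the position $i=n+1$ in $wu$, where the substring equals $u$ itself) and about the hypothesis $n\ge 2M$, which is exactly what ensures that $w_1$ and $w_2$ do not overlap, so that the internal occurrences in cyclic $w$ and the internal-to-$w$ occurrences in cyclic $wu$ correspond bijectively. The cyclic reducedness of $w$ and $wu$ as written is used to guarantee that no reduction is required before reading off the cyclic word, so the position-by-position comparison is legitimate.
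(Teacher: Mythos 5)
Your proof is correct, and it is essentially the argument the paper has in mind: the paper states the lemma as an immediate consequence of the fact that $\langle v,\eta_w\rangle_A$ counts occurrences of $v^{\pm 1}$ in the cyclic word, and your internal/boundary decomposition with cancellation of the internal occurrences is exactly the bookkeeping that makes that claim precise. (Only a minor quibble: the hypothesis $|w|\ge 2M$ is not really needed for the bijection of internal occurrences, which holds regardless; its role is simply to guarantee comfortably that all wraparound substrings are read off the first and last $M-1$ letters of $w$, hence are determined by the length-$M$ initial and terminal segments together with $u$.)
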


\begin{cor}\label{cor:key}
Let $M\ge 2$ and let $u\in S_A(M)$ be cyclically reduced.  Suppose $w$ is a freely reduced word of length $\ge 2M$ such that $u$ and $w$ have the same initial segment of length $M$ and such that $u$ and $w$ have the same terminal segment of length $M$. Then:

\begin{enumerate}
\item We have  $\tau_M(wu)-\tau_M(w)=\tau_M(u)$.
\item For every $2\le M' \le M$ we have  $\tau_{M'}(wu)-\tau_{M'}(w)=\tau_{M'}(u)$.
\end{enumerate}
\end{cor}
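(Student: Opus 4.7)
The plan is to deduce Corollary~\ref{cor:key} from Lemma~\ref{lem:key} by comparing a general admissible $w$ with the special choice $w = u^k$ for $k \ge 2$, and then to pass from $\tau_M$ to $\tau_{M'}$ via the linear projection maps $\pi_j \colon Z_j \to Z_{j-1}$.

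First I would verify the hypotheses of Lemma~\ref{lem:key}. Since $u$ is cyclically reduced and the first and last letters of $w$ coincide, respectively, with the first and last letters of $u$, the concatenation $wu$ is freely reduced (the $w$--$u$ junction has the same two letters as the last-to-first junction of $u$) and both $w$ and $wu$ are cyclically reduced as written. Lemma~\ref{lem:key} then asserts that the difference $\tau_M(wu) - \tau_M(w)$ depends only on $u$ and on the initial and terminal length-$M$ segments of $w$. By hypothesis those segments agree with the initial and terminal length-$M$ segments of $u$, so the difference is the same for \emph{every} admissible $w$ with this boundary data.

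Next I would evaluate the difference on the explicit choice $w_k := u^k$ for any $k \ge 2$, which has the required initial and terminal length-$M$ segments and satisfies $|w_k| = kM \ge 2M$. Using that the cyclic word $u^k$ is $M$-periodic, one verifies window-by-window that for every freely reduced word $v$ of length at most $M$,
\[
\langle v, \eta_{u^k}\rangle_A \;=\; k\,\langle v, \eta_u\rangle_A,
\]
that is, $\tau_{M'}(u^k) = k\,\tau_{M'}(u)$ for all $1 \le M' \le M$. Specializing to $M' = M$,
\[
\tau_M(w_k u) - \tau_M(w_k) \;=\; \tau_M(u^{k+1}) - \tau_M(u^k) \;=\; \tau_M(u),
\]
which, combined with the previous paragraph, establishes part~(1).

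For part~(2), I would simply apply the $\mathbb{R}$-linear composition $\pi_{M'+1} \circ \cdots \circ \pi_M \colon Z_M \to Z_{M'}$ to the identity in (1). Since $\pi_j \circ \tau_j = \tau_{j-1}$ on all currents, the left-hand side becomes $\tau_{M'}(wu) - \tau_{M'}(w)$ and the right-hand side becomes $\tau_{M'}(u)$, as required. The only real obstacle is some careful bookkeeping: verifying the cyclic-reducedness conditions of Lemma~\ref{lem:key} and the periodic identity $\tau_{M'}(u^k) = k\,\tau_{M'}(u)$ for $M' \le M$; both follow directly from the assumption that $u$ itself is cyclically reduced.
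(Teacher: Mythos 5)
Your proof is correct and follows essentially the same route as the paper: invoke Lemma~\ref{lem:key} to replace $w$ by a power of $u$ (the paper takes $w=u^2$), use $\tau_M(u^{k})=k\,\tau_M(u)$ to get part (1), and obtain part (2) from part (1) by the linear projections $\pi_j$ with $\pi_j\circ\tau_j=\tau_{j-1}$. The only difference is that you spell out the cyclic-reducedness hypotheses and the projection step, which the paper leaves implicit.
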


\begin{proof}
By Lemma~\ref{lem:key} we may assume that $w=u^2$. Then $\tau_M(wu)=\tau_M(u^3)=3\tau_M(u)$, $\tau_M(w)=\tau_M(u^2)=2\tau_M(u)$ and hence $\tau_M(wu)-\tau_M(w)=\tau_M(u)$. This establishes part (1) of the corollary. Part (2) immediately follows from part (1).
\end{proof}

\subsection{Signed currents and extended intersection form.}

For technical reasons in these paper we need to work with currents that are not necessarily positive as measures.
Let $\mathcal SCurr(F_N)$ be the space of all $F_N$-invariant and flip invariant locally finite (finite on compact sets) Borel measures on $\partial_2 F_N$ that are not necessarily positive.  We call elements of $\mathcal SCurr(F_N)$ \emph{signed geodesic currents}. Thus $\mathcal SCurr(F_N)$ is a vector space over $\mathbb R$.  As before, we equip $\mathcal SCurr(F_N)$ with the weak-* topology.  For a simplicial chart $\alpha:F_N\to \pi_1(\Gamma)$ and a measure $\mu\in \mathcal SCurr(F_N)$ the weights $\langle v,\mu\rangle_\alpha$ are defined in the same way as for elements of $\Curr(F_N)$. Also, similar to the $\Curr(F_N)$ situation, we have that $\lim_{n\to\infty} \mu_n =\mu$ , where $\mu_n,\mu\in \mathcal SCurr(F_N)$ if and only if for every finite non-degenerate reduced path $v$ in $\Gamma$ we have $\lim_{n\to\infty} \langle v, \mu_n\rangle_\alpha=\langle v,\mu\rangle_\alpha$. The set $\Curr(F_N)$ is a closed $\mathbb R_{\ge 0}$-linear subspace of $\mathcal SCurr(F_N)$.
We can extend the intersection form $\langle \ , \ \rangle: \cvn\times \Curr(F_N)\to \mathbb R_{\ge 0}$ to $\langle \ ,\  \rangle: \cvn\times \mathcal SCurr(F_N)\to \mathbb R$ by the formula
\[
\langle T, \mu\rangle=\sum_{e\in E^+\Gamma} \langle e, \mu\rangle_\alpha \mathcal L(e).
\]
where $\mu\in \mathcal SCurr(F_N)$ and $T\in \cvn$ is given by a simplicial chart $\alpha:F_N\to \pi_1(\Gamma)$ with a metric structure $\mathcal L$ on $\Gamma$. Then exactly the same argument as in \cite{Ka2} yields:  
\begin{prop}\label{prop:ext}
The map $\langle \ ,\  \rangle: \cvn\times \mathcal SCurr(F_N)\to \mathbb R$ is continuous, $\mathbb R_{\ge 0}$-homogeneous in the first argument, $\mathbb  R$-linear in the second argument and $Out(F_N)$-invariant (with respect to the natural left actions of $Out(F_N)$ on $\cvn$ and $\mathcal SCurr(F_N)$).
\end{prop}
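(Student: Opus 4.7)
My plan is to take the displayed formula as the definition of $\langle T,\mu\rangle$ for $T\in\cvn$ represented by a simplicial chart $\alpha:F_N\to\pi_1(\Gamma)$ with metric structure $\mathcal L$, and then verify in turn well-definedness, the two linearity/homogeneity assertions, continuity, and $\Out(F_N)$-invariance. The observation driving everything is that for fixed $\alpha$, each weight map $\mu\mapsto\langle e,\mu\rangle_\alpha = \mu(Cyl_\Gamma(e))$ is $\mathbb R$-linear on $\mathcal SCurr(F_N)$, and the formula is a finite linear combination of such weights with nonnegative coefficients $\mathcal L(e)$. Granted well-definedness, $\mathbb R$-linearity in the second argument and $\mathbb R_{\ge 0}$-homogeneity in the first are then read off immediately from the formula.

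For well-definedness (independence of the chart) and $\Out(F_N)$-invariance I would reduce to the positive case via a Hahn--Jordan decomposition argument. The space $\partial^2 F_N$ is locally compact and $\sigma$-compact, and a signed current $\mu$ has locally finite total variation, so $|\mu|$ is $\sigma$-finite and the Hahn theorem produces a decomposition $\mu=\mu^+-\mu^-$ with $\mu^\pm$ positive and locally finite. Since the $F_N$-action and the flip map preserve $\mu$, and the Hahn decomposition is unique modulo $|\mu|$-null sets, the components $\mu^\pm$ inherit $F_N$-invariance and flip-invariance; hence $\mu^\pm\in\Curr(F_N)$. By $\mathbb R$-linearity of the weights, the displayed formula evaluates to $\langle T,\mu^+\rangle-\langle T,\mu^-\rangle$ in terms of the positive-case intersection form of Proposition~\ref{int}. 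Property (5) of that proposition then gives chart-independence of the extended form, and property (3) combined with $\phi\mu=\phi\mu^+-\phi\mu^-$ gives $\langle T\phi,\mu\rangle=\langle T,\phi\mu\rangle$.

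For continuity I would argue directly from the formula. Given $(T,\mu)\in\cvn\times\mathcal SCurr(F_N)$, choose a simplicial chart $\alpha$ whose open cone contains $T$; such a chart exists because $\cvn$ is the union of open cones indexed by marked graphs. For any sequence $(T_n,\mu_n)\to(T,\mu)$, the trees $T_n$ eventually lie in this open cone and can be represented as $(\alpha,\mathcal L_n)$ with $\mathcal L_n(e)\to\mathcal L(e)$ for every $e\in E\Gamma$ (length-function convergence forces edge-length convergence inside a cone). Each cylinder $Cyl_\Gamma(e)$ is compact and clopen in $\partial^2 F_N$, so its indicator is a compactly supported continuous function, and the weak-$*$ convergence $\mu_n\to\mu$ yields $\langle e,\mu_n\rangle_\alpha\to\langle e,\mu\rangle_\alpha$. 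The finite sum $\sum_{e\in E^+\Gamma}\langle e,\mu_n\rangle_\alpha\mathcal L_n(e)$ therefore converges to $\sum_{e\in E^+\Gamma}\langle e,\mu\rangle_\alpha\mathcal L(e)$, giving joint continuity. The main obstacle I anticipate is the measure-theoretic step in well-definedness: one must confirm that our notion of signed current admits a well-behaved Hahn decomposition (which amounts to requiring $|\mu|$ to be locally finite) and that the components of that decomposition inherit the invariance properties; both points come down to the uniqueness of the Hahn decomposition modulo null sets. Once these measure-theoretic checks are made, the remainder of the proof is a routine transcription of the arguments of \cite{Ka2}.
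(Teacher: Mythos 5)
Your overall strategy is fine where it overlaps with what the paper actually does: the paper's proof of this proposition is only the remark that ``exactly the same argument as in \cite{Ka2}'' applies, so taking the chart formula as the definition, reading off $\mathbb R$-linearity in $\mu$ and $\mathbb R_{\ge 0}$-homogeneity in $T$, and reducing chart-independence and $\Out(F_N)$-equivariance to the positive case via a Hahn--Jordan decomposition $\mu=\mu^+-\mu^-$ with $\mu^\pm\in\Curr(F_N)$ is a legitimate way to supply the missing details; indeed the Hahn--Jordan device is exactly the one the paper itself uses in Proposition~\ref{prop:dense}(1), and your caveat about local finiteness of $|\mu|$ is at the same level of rigor as the paper's own use of it.

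The genuine gap is in your continuity argument, at the sentence asserting that if $(T_n,\mu_n)\to(T,\mu)$ then the $T_n$ eventually lie in the open cone of a chart $\alpha:F_N\to\pi_1(\Gamma)$ representing $T$. This is false unless $\Gamma$ is trivalent: if $\Gamma$ has a vertex of valence at least $4$ (for instance when $T$ is a rose), every neighborhood of $T$ in $\cvn$ contains trees obtained by blowing up that vertex into an edge of length $\epsilon$, whose translation length functions converge to $||\cdot||_T$ as $\epsilon\to 0$ but which lie in the cone of a different marked graph and admit no representation $(\alpha,\mathcal L_n)$. So your edge-by-edge comparison simply does not apply to such sequences, and joint continuity at points of non-maximal cones is precisely the nontrivial part of the statement. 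To close the gap one must compare the pairing across adjacent cones: for a blow-up $\Gamma'$ collapsing onto $\Gamma$, the weight of a non-collapsed edge of $\Gamma'$ equals the weight of its image edge of $\Gamma$, while the weight of a collapsed edge is a finite sum of weights of length-two reduced paths of $\Gamma$ through the image vertex; since the collapsed edges of $T_n$ have lengths tending to $0$ and all these weights are fixed finite linear combinations of $\alpha$-weights of $\mu_n$ (which converge to the corresponding weights of $\mu$), the sums $\sum_{e'\in E^+\Gamma'}\langle e',\mu_n\rangle\,\mathcal L_n(e')$ still converge to $\sum_{e\in E^+\Gamma}\langle e,\mu\rangle_\alpha\,\mathcal L(e)$ -- and one must also note that, near $T$, only the finitely many cones of blow-ups of $\Gamma$ occur. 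Alternatively, one can follow the continuity argument of \cite{Ka2} verbatim, which is what the paper invokes; as written, your proof of joint continuity does not go through.
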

It is obvious from the definition and from Proposition~\ref{int} that this extended intersection form agrees with the old one on $\cvn\times \Curr(F_N)$.
Moreover, we also get a sequence of finite-dimensional approximations of $\mathcal SCurr(F_N)$ similar to that of $\Curr(F_N)$. Namely, we define $\widetilde Z_M$ exactly as $Z_M$ but omitting the condition $p_v\ge 0, v\in S_A(M)$. Then $\widetilde Z_M\subseteq \mathbb R^{d(M)}$ is a $\mathbb R$-linear subspace containing $Z_M$, the map $\pi_M:Z_M\to Z_{M-1}$ naturally extends to a $\mathbb R$-linear map $\widetilde\pi_M:\widetilde Z_M\to\widetilde Z_{M-1}$ (given by the same formula as $\pi_M$) and the map $\tau_M:\Curr(F_N)\to Z_M$ naturally extends to a $\mathbb R$-linear continuous map  $\widetilde\tau_M:\mathcal SCurr(F_N)\to \widetilde Z_M$ given by the same formula as $\tau_M$.  Also, we still have the property $\widetilde \pi_M\circ \widetilde\tau_M=\widetilde\tau_{M-1}$ for every $M\ge 2$. Note that knowing all the weights $\langle v,\mu\rangle_A$, where $v\in S_A(M)$, uniquely determines all the "lower-level" weights  $\langle u,\mu\rangle_A$, where $1\le |u|_A\le M$, by iterating linear formulas corresponding to the last equation in the definition of $Z_M$.  

One can show that $\mathcal SCurr(F_N)$ is equal to the inverse limit of the sequence $\widetilde \pi_M$, although this fact is not needed in this paper.

\begin{notation}
If $X$ is a $\mathbb R$-vector space and $Y\subseteq X$, we denote by
$Span(Y)$ the linear span of $Y$ in $X$, that is, the set of all
finite $\mathbb R$-linear combinations of elements of $Y$.
\end{notation}

\begin{prop}\label{prop:dense}
Let $N\ge 2$. Then the following hold:
\begin{enumerate}
\item For every $\mu\in \mathcal SCurr(F_N)$ there exist $\mu_1,\mu_2\in
  Curr(F_N)$ such that $\mu=\mu_1-\mu_2$.
\item The span $Span\big( \{\eta_{g}: g\in F_N-\{1\}\} \big)$ is dense
  in $\mathcal SCurr(F_N)$.
\end{enumerate}
\end{prop}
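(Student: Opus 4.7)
The plan for part (1) is to apply the Hahn--Jordan decomposition theorem to $\mu$, viewed as a signed Radon measure on the locally compact $\sigma$-compact space $\partial^2 F_N$. This produces a Borel partition $\partial^2 F_N = P \sqcup Q$ such that setting $\mu_1(E) := \mu(E \cap P)$ and $\mu_2(E) := -\mu(E \cap Q)$ yields locally finite positive Borel measures with $\mu = \mu_1 - \mu_2$. The nontrivial point is to verify that $\mu_1$ and $\mu_2$ are themselves geodesic currents, i.e.\ that they inherit $F_N$-invariance and flip-invariance from $\mu$. For any $g \in F_N$, the translate $gP$ is again a positive Hahn set for $\mu$: if a Borel $F$ lies in $gP$, then $g^{-1}F \subseteq P$, so $\mu(F) = \mu(g^{-1}F) \ge 0$ by $F_N$-invariance of $\mu$. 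By the essential uniqueness of the Hahn decomposition, $gP$ and $P$ differ by a $|\mu|$-null set, and this forces $g \cdot \mu_1 = \mu_1$ and $g \cdot \mu_2 = \mu_2$. The identical argument applied to the flip involution $(x,y) \mapsto (y,x)$ yields flip-invariance. Since $\mu_1(K) + \mu_2(K) = |\mu|(K) < \infty$ on every compact set $K$, both $\mu_1$ and $\mu_2$ are locally finite, so $\mu_1, \mu_2 \in \Curr(F_N)$.

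For part (2), the plan is to bootstrap from (1) and the known density of rational currents in $\Curr(F_N)$. Given $\mu \in \mathcal SCurr(F_N)$, use (1) to write $\mu = \mu_1 - \mu_2$ with $\mu_i \in \Curr(F_N)$. Since the scalar multiples $c\eta_g$ (with $c\ge 0$ and $g\ne 1$) are already dense in $\Curr(F_N)$, so is the larger set of finite non-negative $\mathbb R$-linear combinations of counting currents. Hence for each $i=1,2$ there is a sequence $\nu_n^{(i)} \in Span\big(\{\eta_g : g \in F_N - \{1\}\}\big)$ with $\nu_n^{(i)} \to \mu_i$ in $\Curr(F_N)$, and hence also in $\mathcal SCurr(F_N)$. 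Because addition is continuous for the weak-$*$ topology on $\mathcal SCurr(F_N)$, the differences $\nu_n^{(1)} - \nu_n^{(2)}$ converge to $\mu_1 - \mu_2 = \mu$ while remaining inside $Span\big(\{\eta_g : g \in F_N - \{1\}\}\big)$, as required.

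The main obstacle I anticipate is the invariance step in part (1). One might be tempted to construct an explicitly $F_N$- and flip-invariant Hahn set from scratch, but this is awkward because $F_N$ acts on $\partial^2 F_N$ with rich dynamics and there is no obvious canonical choice. The cleaner route taken above is to appeal to the essential uniqueness of the Hahn decomposition up to $|\mu|$-null sets, which automatically transfers the symmetries of $\mu$ to those of $\mu_1$ and $\mu_2$ without requiring an explicit invariant construction. Once (1) is in place, (2) is a routine density/continuity argument that only uses the vector-space structure of $\mathcal SCurr(F_N)$ and the known density result for $\Curr(F_N)$.
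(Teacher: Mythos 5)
Your proposal is correct and follows essentially the same route as the paper: Hahn--Jordan decomposition for part (1), with the uniqueness of the decomposition up to $|\mu|$-null sets used to transfer $F_N$- and flip-invariance to $\mu_1,\mu_2$, and then part (2) deduced from part (1) together with the density of rational currents in $\Curr(F_N)$. The only cosmetic difference is that the paper phrases the invariance step as choosing the Hahn sets $E_1,E_2$ themselves to be invariant, while you keep an arbitrary Hahn partition and argue invariance of the resulting measures directly; these are the same argument.
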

\begin{proof}
Note that part (1) directly implies part (2) since it is known that
the set of rational currents is dense in $Curr(F_N)$.

To see that part (1) holds, recall that by the Hahn-Jordan
Decomposition Theorem for
any $\mu\in \mathcal SCurr(F_N)$ there exists a decomposition $\partial^2
F_N= E_1\sqcup E_2$ where $E_1, E_2$ are Borel sets and where for
every Borel subset $S\subseteq E_1$ we have $\mu(S)\ge 0$ and for every
Borel subset $S\subseteq E_2$ we have $\mu(S)\le 0$. 
Moreover, in view of $F_N$-invariance and flip-invariance of $\mu$, the
uniqueness part of of the Hahn-Jordan Theorem easily implies that the
sets $E_1, E_2$ may be assumed to be $F_N$-invariant and
flip-invariant.

Define measures $\mu_1,\mu_2$
on $\partial^2 F_N$ by $\mu_1(S):=\mu(S\cap E_1)$ and
$\mu_2(S):=-\mu(S\cap E_2)$, where $S\subseteq \partial^2 F_N$. Then
$\mu=\mu_1-\mu_2$ and $\mu_1,\mu_2$ are positive Borel measures on
$\partial^2F$ that are finite on compact subsets. Moreover, since $E_1, E_2$ are $F_N$-invariant and
flip-invariant, the measures $\mu_1,\mu_2$ are also $F_N$-invariant and
flip-invariant, so that $\mu_1,\mu_2\in Curr(F_N)$.
\end{proof}

\section{Linear spans and random rays}

\begin{prop}\label{prop:main}
There exists a subset of $\partial F_N$ of $m_A$-measure 1 such that every point $\xi$ from that subset has the following property:

For every $n_0\ge 1$ we have

\[
\mathcal SCurr(F_N)=\overline{Span\big( \{\eta_{\xi(n)}: n\ge n_0\} \big)}
\]
\end{prop}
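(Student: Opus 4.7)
The plan is to reduce density of the span to a spanning statement in each finite-dimensional $\widetilde{Z}_M$ and verify it using ergodicity of the random walk together with a strengthening of Corollary~\ref{cor:key}. Since the weak-$*$ topology on $\mathcal{SCurr}(F_N)$ corresponds to pointwise convergence of the coordinates $\langle v,\cdot\rangle_A$, and since all length-$M$ coordinates are assembled by the continuous linear map $\widetilde\tau_M$, a linear subspace of $\mathcal{SCurr}(F_N)$ is dense if and only if $\widetilde\tau_M$ sends it onto a dense --- equivalently, by finite-dimensionality, equal --- subset of $\widetilde{Z}_M$ for every $M\ge 1$. Thus it suffices to prove that for $m_A$-a.e.\ $\xi$ and every $M, n_0 \ge 1$,
\[
Span\bigl(\{\tau_M(\eta_{\xi(n)}) : n \ge n_0\}\bigr) = \widetilde{Z}_M,
\]
and, by Proposition~\ref{prop:dense}, it is enough to verify that $\tau_M(\eta_u)$ lies in this span for every cyclically reduced $u \in F_N\setminus\{1\}$, since such vectors already span $\widetilde{Z}_M$.

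The key technical step is a strengthening of Corollary~\ref{cor:key}: \emph{if $u$ is cyclically reduced of arbitrary length $L$, $w$ is freely reduced of sufficient length ending in $u$, and both $w$ and $wu$ are cyclically reduced as written, then $\tau_M(wu) - \tau_M(w) = \tau_M(\eta_u)$}, or equivalently $\eta_{wu} = \eta_w + \eta_u$ as signed Borel measures on $\partial^2 F_N$. This follows from the same direct bookkeeping of cyclic subwords as in the proof of Corollary~\ref{cor:key}(1): when $w$ ends in $u$, passing from the cyclic word $w$ to the cyclic word $wu$ amounts to inserting one extra traversal of $u$, and the junction subwords line up so that the only net new contributions are precisely the length-$M$ cyclic subwords of $u$.

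With this strengthening, the extraction of each $\tau_M(\eta_u)$ follows from ergodicity. By the strong law of large numbers for the non-backtracking random walk on $F_N$ (cf.~\cite{KKS}), for $m_A$-a.e.\ $\xi = x_1 x_2 \cdots$ every finite freely reduced word over $A$ appears in $\xi$ as a contiguous subword infinitely often. Fix such a $\xi$ and $n_0$. Given a cyclically reduced $u$ of length $L$, pick a cyclic shift $u'$ of $u$ (so $\eta_{u'} = \eta_u$) whose last letter is not $x_1^{-1}$; such a shift exists unless every letter of $u$ equals $x_1^{-1}$, in which case $u = (x_1^{-1})^L$ and $\eta_u = L\,\eta_{x_1}$, so the argument may instead be applied with $u = x_1$. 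Choose $n_1 \ge n_0 + 2L$ sufficiently large with $x_{n_1-L+1}\cdots x_{n_1+L} = (u')^2$, and set $n_2 = n_1 + L$: then $\xi(n_1)$ ends in $u'$, $\xi(n_2) = \xi(n_1)\cdot u'$, and both $\xi(n_1)$ and $\xi(n_2)$ are cyclically reduced as written. The strengthened lemma then gives $\tau_M(\eta_u) = \tau_M(\eta_{u'}) = \tau_M(\xi(n_2)) - \tau_M(\xi(n_1))$, which lies in the span; ranging over all cyclically reduced $u$ finishes the argument. The main obstacle I expect is establishing the strengthened lemma in the form above: as stated, Corollary~\ref{cor:key} demands that $w$ both \emph{start and end} with a length-$M$ word $u$, whereas the plan needs the identity under the weaker hypothesis that $w$ merely \emph{ends} with a cyclically reduced $u$ of possibly different length. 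Verifying this generalization is a tedious but routine position-by-position subword count, structurally parallel to the proof of Corollary~\ref{cor:key}(1), in which the wrap-around contributions of the cyclic word $w$ match those of $wu$ except for the junction created by the newly inserted copy of $u$.
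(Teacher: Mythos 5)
Your reduction to the finite-dimensional quotients $\widetilde Z_M$ is sound, but the ``strengthened lemma'' that your extraction rests on is false in the generality you use it, and this is a genuine gap rather than a routine verification. The measure-level form $\eta_{wu}=\eta_w+\eta_u$ cannot hold (it would force the level-$M'$ identity for every $M'$), and already the fixed-$M$ identity $\tau_M(wu)-\tau_M(w)=\tau_M(\eta_u)$ fails whenever $|u|<M-1$, yet your argument applies it to \emph{every} cyclically reduced $u$ (in particular to single letters) for every $M$. Concretely, in $F_2=F(a,b)$ take $\xi$ beginning with $b^kaa\cdots$, $u=u'=a$, $M=3$, $n_1=k+1$, $n_2=k+2$ (this fits your setup: $\xi(n_1)=b^ka$ ends in $u'$, both prefixes are cyclically reduced, and $k$ can be as large as you like). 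Then $\tau_3(\xi(n_2))-\tau_3(\xi(n_1))$ has weight $+1$ on $baa$ and on $aab$, weight $-1$ on $bab$, and weight $0$ on $aaa$, whereas $\tau_3(\eta_a)$ has weight $1$ on $aaa$ only. The reason is that inserting one copy of $u'$ at the end of the cyclic word $\xi(n_1)$ creates and destroys junction subwords formed from the $M-1$ letters preceding the inserted copy and the $M-1$ letters following it (cyclically, the first letters of $\xi$), and these coincide with the cyclic length-$M$ subwords of $u'$ only when those flanking letters are themselves read from the periodic word $(u')^{\infty}$ --- which your hypotheses guarantee only if $|u'|\ge M-1$. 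This is precisely why Corollary~\ref{cor:key} demands that $w$ begin \emph{and} end with the full length-$M$ word $u$, and why the paper never claims an exact identity: it locates occurrences of $vs(w^nt)^2$ with $v$ the length-$M$ prefix of $\xi$ and $s,t$ buffer words, so the junction error stays bounded independently of $n$, then divides by $n$ and lets $n\to\infty$, obtaining $\eta_w$ only in the \emph{closure} of the span.

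Your route (exact membership of $\tau_M(\eta_u)$ in the span at each finite level, rather than the paper's limiting argument) is salvageable, but not as written. Since $\eta_{u^k}=k\,\eta_u$, you may replace $u$ by $u^k$ with $k|u|\ge M-1$; for an appended word of length at least $M-1$, with $\xi(n_1)$ ending in that word and both prefixes cyclically reduced, the position-by-position count does yield $\tau_M(\xi(n_2))-\tau_M(\xi(n_1))=\tau_M(u')$, and your special case $u=(x_1^{-1})^L$ becomes unnecessary. But you must restate and prove the lemma with this length hypothesis: in the ``arbitrary length $L$'' form it is simply not true, so the key step fails for all short $u$ and the argument is incomplete. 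A smaller point: the assertion that the vectors $\tau_M(\eta_u)$ span $\widetilde Z_M$ deserves a line of justification, e.g.\ Proposition~\ref{prop:dense} together with the observation that $\tau_M(\nu_A)$ has all coordinates positive, whence $Z_M-Z_M=\widetilde Z_M$; this spanning persists after restricting to long $u$, again because $\eta_{u^k}=k\eta_u$.
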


\begin{proof}

Let $\Omega\subseteq \partial F_N$ be the set of all $\xi\in \partial F_N$ such that every nontrivial freely reduced word over $A$ occurs as a subword of $\xi$ infinitely many times. Then, as is well-known, $m_A(\Omega)=1$.

Let $\xi\in \Omega$ be arbitrary. 

Let $n_0\ge 2$ and let $w$  be an arbitrary nontrivial cyclically reduced word over $A$.
Let $M\ge 2$ be an arbitrary integer. Let $v$ be the initial segment of $\xi$ of length $M$. We can always choose freely reduced words  $s,t$ of length $M$ such that the word $vs(w^nt)^m$ is freely and cyclically reduced as written for all $m,n\ge 1$.  

Let $n\ge 1$ be arbitrary. Since $\xi\in \Omega$, the word $vs(w^nt)^2$ occurs as a subword of $\xi$ infinitely many times. Choose such an occurrence that starts at position $\ge n_0+1$ in $\xi$. Thus there exists an initial segment of $\xi$ of the form $vf_nvs(w^nt)^2$ where $|vf_n|\ge n_0+1$. Then by Corollary~\ref{cor:key} we have $\tau_M(vf_nvs(w^nt)^2)-\tau_M(vf_nvsw^nt)=\tau_M(vsw^nt)$.  Hence
\[
\lim_{n\to\infty} \frac{1}{n}\left( \tau_M(vf_nvs(w^nt)^2)-\tau_M(vf_nvsw^nt)  \right)=\tau_M(w).
\]

Then (in view of the last equation in the definition of $Z_M$ and $\widetilde Z_M)$ we can find $n=n_M\ge 1$ such that for every freely reduced word $u$ over $A$ with $0<|u|\le M$ we have \[\left|\langle u, \frac{1}{n}(\eta_{vf_nvs(w^nt)^2}-\eta_{vf_nvsw^nt})\rangle-\langle u, \eta_w\rangle\right|\le 2^{-M}\]
Put $\mu_M=\frac{1}{n}(\eta_{vf_{n}vs(w^{n}t)^2}-\eta_{vf_{n}vsw^{n}t})$
Therefore, by definition of topology on $\mathcal SCurr(F_N)$, we have
$\lim_{M\to\infty} \mu_M=\eta_w$ in $\mathcal SCurr(F_N)$. Hence $\eta_w\in
\overline{Span\big( \{\eta_{\xi(n)}: n\ge n_0\} \big)}$. 
Proposition~\ref{prop:dense} now implies that
\[
\mathcal SCurr(F_N)=\overline{Span\big( \{\eta_{\xi(n)}: n\ge n_0\} \big)},
\]
as required.

\end{proof}

We can now prove the main result, Theorem~\ref{main} from the Introduction:
\begin{thm}\label{thm:main}
There exists a subset of $\partial F_N$ of $m_A$-measure 1 such that every point $\xi$ from that subset has the following property:

For every $n_0\ge 1$ the set

\[
 \{\xi(n): n\ge n_0\}\subseteq F_N
\]
is spectrally rigid.
\end{thm}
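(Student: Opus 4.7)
The plan is to combine Proposition~\ref{prop:main} with the properties of the extended intersection form from Proposition~\ref{prop:ext} and the standard fact that $F_N$ itself is a spectrally rigid subset of $F_N$. Let $\Xi\subseteq \partial F_N$ be the full $m_A$-measure set provided by Proposition~\ref{prop:main}; I claim this same $\Xi$ witnesses the theorem.

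Fix $\xi\in\Xi$ and $n_0\ge 1$, and suppose $T_1,T_2\in\cvn$ satisfy $\|\xi(n)\|_{T_1}=\|\xi(n)\|_{T_2}$ for every $n\ge n_0$. By property (4) of Proposition~\ref{int}, this rewrites as $\langle T_1,\eta_{\xi(n)}\rangle=\langle T_2,\eta_{\xi(n)}\rangle$ for all $n\ge n_0$. First I would observe that, by $\mathbb R$-linearity of the extended intersection form in the second argument (Proposition~\ref{prop:ext}), the two continuous functionals $\mu\mapsto \langle T_1,\mu\rangle$ and $\mu\mapsto\langle T_2,\mu\rangle$ on $\mathcal SCurr(F_N)$ agree on the linear span $Span(\{\eta_{\xi(n)}:n\ge n_0\})$.

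Next I would invoke Proposition~\ref{prop:main}: since $\xi\in\Xi$, this span is dense in $\mathcal SCurr(F_N)$. By continuity of the extended intersection form (again Proposition~\ref{prop:ext}), the two functionals therefore agree on all of $\mathcal SCurr(F_N)$. In particular, specializing to rational currents gives $\langle T_1,\eta_g\rangle=\langle T_2,\eta_g\rangle$ for every $g\in F_N\setminus\{1\}$, i.e.\ $\|g\|_{T_1}=\|g\|_{T_2}$ for every $g\in F_N$.

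Finally I would invoke the classical fact (a consequence of the result of Culler--Morgan, also stated in the abstract) that $F_N$ itself is spectrally rigid: a point of $\cvn$ is uniquely determined by its translation length function on $F_N$. Hence $T_1=T_2$, completing the proof. The genuinely substantive step in this argument has already been carried out in Proposition~\ref{prop:main}; what remains here is just a short formal deduction, and I do not anticipate any real obstacle beyond cleanly citing the required properties of $\langle\ ,\ \rangle$ and the linearity/continuity of the extension to signed currents.
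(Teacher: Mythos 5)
Your proposal is correct and follows essentially the same route as the paper: rewrite the hypotheses via $\|\xi(n)\|_{T_i}=\langle T_i,\eta_{\xi(n)}\rangle$, use $\mathbb R$-linearity and continuity of the extended intersection form together with the density statement of Proposition~\ref{prop:main} to conclude that $\|g\|_{T_1}=\|g\|_{T_2}$ for all $g\in F_N$, and then invoke the standard fact that a point of $\cvn$ is determined by its translation length function. The only cosmetic difference is the attribution of that last fact (the paper points to Paulin-type results in the introduction rather than naming Culler--Morgan in the proof), which does not affect correctness.
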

\begin{proof}
Let $\Omega\subseteq \partial F_N$ be the set with $m_A(\Omega)=1$ provided by Proposition~\ref{prop:main}.
Let $\xi\in \Omega$ be arbitrary and let $n_0\ge 1$. Suppose $T_1,T_2\in \cvn$ are such that for every $n\ge n_0$ we have $||\xi(n)||_{T_1}=||\xi(n)||_{T_2}$.
Hence for every $n\ge n_0$
\[
\langle T_1, \eta_{\xi(n)}\rangle=\langle T_2, \eta_{\xi(n)}\rangle.
\] 
By linearity and continuity of the intersection form on  $\cvn\times \mathcal SCurr(F_N)$ it follows that
\[
\langle T_1, \mu\rangle=\langle T_2,\mu\rangle
\]
for every $\mu\in \overline{Span\big( \{\eta_{\xi(n)}: n\ge n_0\} \big)}$. Hence, by Proposition~\ref{prop:main}, for every nontrivial $w\in F_N$ we have
\[
||w||_{T_1}=\langle T_1, \eta_w\rangle=\langle T_2, \eta_w\rangle=||w||_{T_2}.\] 
Thus the length functions $||.||_{T_1}$ and $||.||_{T_2}$ are equal on $F_N$ and hence $T_1=T_2$ in $\cvn$. This shows that the set  $\{\xi(n): n\ge n_0\}$
is spectrally rigid in $F_N$, as claimed.
\end{proof}

\begin{rem}\label{uc}
As noted in \cite{KKS}, for an $m_A$-a.e. point $\xi\in \partial F_N$ we have $\lim_{n\to\infty} \frac{1}{n}\eta_{\xi(n)}=\nu_A$. Here $\nu_A\in \Curr(F_N)$ is the \emph{uniform current} corresponding to $A$, which is given by the weights $\langle v, \nu_A\rangle=\frac{1}{d(|v|_A)}$ for every nontrivial freely reduced word $v$ over $A$. The proof of Theorem~\ref{thm:main} is based on exploiting the fact that the sequence $\frac{1}{n}\eta_{\xi(n)}$ converges to $\nu_A$ ``from all possible directions''. 
\end{rem}

\end{document}